\newtheorem{theorem}{Theorem}[section]
\newtheorem{rem}[theorem]{Remark}
\begin{document}
	
\title{\textbf{Red Blood Cells as Elastic Surfaces}}

\author{E. Aulisa, S. Fields, M. Toda
}
\date{May 2025}

\maketitle

\begin{abstract}
We study red blood cells using the Helfrich-Canham functional: due to their lipid bilayer structure, RBCs are naturally modeled using the theory of elastic surfaces. In this study, we demonstrate that Cassinian ovals, except for the limiting case of the round sphere, do not solve the shape equation. We further discuss conditions under which they may serve as effective approximations.
\end{abstract}
\noindent{\emph{Keywords: generalized Willmore surface, Helfrich surface, shape equation, red blood cell, closed elastic surfaces} }\\
\noindent{\emph{Mathematics Subject Classification 2020: 35G20, 35A05, 49S05} }

\section{Introduction}
Over the past two decades, there has been growing interest in identifying an optimal profile curve for modeling red blood cells (RBCs), also known as \textit{erythrocytes} in medical literature. As the most abundant blood cells, RBCs play a vital role in oxygen transport through the circulatory system. Although their shape has been studied for centuries, accurate modeling requires advanced mathematics and validation against biophysical data obtained through modern instrumentation.

Due to their lipid bilayer structure, RBCs are naturally modeled using the theory of elastic surfaces developed by Helfrich, Canham, and Evans in the 1970s~\cite{HuOuYang, Helfrich}. In this framework, RBCs minimize the so-called Helfrich-Canham energy, a curvature-dependent functional involving mean curvature, Gaussian curvature, and physical constants.

A lipid bilayer is a thin membrane of amphipathic molecules with hydrophilic and hydrophobic ends, forming the structural basis of cell membranes and organelles. These bilayers define cellular boundaries and enable ion and electron transport essential to cell function.

In~\cite{Mladenov}, RBC profiles are modeled using Cassinian ovals, with estimates provided for geometric and energetic parameters. Similarly,~\cite{Liu} derives RBC shapes via minimization of total elastic surface energy. While Cassinian ovals have been proposed as highly accurate descriptors of RBC profiles~\cite{Mladenov}, their compatibility with the Helfrich-Canham shape equation has not been rigorously established. In this study, we demonstrate that Cassinian ovals, except for the limiting case of the round sphere, do not solve the shape equation.

\begin{rem}    
The spontaneous curvature $c_0$ is assumed to be constant in most circumstances, but it is not physically true that spontaneous curvature will always be constant. In practice, there are many variable chemical and environmental factors (e.g. temperature, shear) in lipid bilayers or cell membranes which make $c_0$ variable. However, since the RBC has a rather homogeneous structure and it simplifies our first variation calculations, we justifiably assume $c_0$ is constant. See \cite{Deuling} for more information.
\end{rem}

\section{First Variation}

To model the geometry of the red blood cell (RBC) membrane, we consider the following surface energy functional:
\[
\Theta(M)=\iint_M \beta(2H - c_0)^2 \, dS + \lambda \int dA + \Delta P \int dV,
\]
commonly referred to as the \textit{Helfrich-Canham functional}. In this expression:
\begin{itemize}
    \item $\beta > 0$ is the \textit{bending rigidity} of the membrane;
    \item $c_0 \in \mathbb{R}$ is the \textit{spontaneous curvature}, as introduced in \cite{Helfrich}, \cite{HuOuYang}, and other foundational works;
    \item $H = -\frac{k_1 + k_2}{2}$ is the \textit{mean curvature} (also known as \textit{Germain curvature}), where $k_1$ and $k_2$ are the principal curvatures;
    \item $\lambda$ and $\Delta P$ are \textit{Lagrange multipliers} enforcing constraints of constant surface area and enclosed volume, respectively. These correspond physically to \textit{surface tension} and \textit{osmotic pressure}.
\end{itemize}

The sign of $H$ depends on the choice of surface orientation. Additionally, $c_0$ can take different signs in various physical contexts, depending on whether the membrane tends to bend inward or outward in the absence of external forces.

When $c_0 = 0$, the functional simplifies to the classical \textit{Willmore functional}, and its critical points are known as \textit{Willmore surfaces}.

A fundamental step in studying membrane shapes is computing the \textit{first variation} of the functional under normal deformations of a one parameter family of closed surfaces. Since the surface is closed, Green's Theorem can be applied to derive the associated {\it Euler-Lagrange equation}. This equation characterizes the critical points of the Helfrich functional and is commonly referred to in the literature as the \textit{Shape Equation}, \textit{Helfrich Equation}, or \textit{Helfrich Shape Equation} (\cite{Helfrich}, \cite{HuOuYang}, \cite{Vaidya}, \cite{JulicherSeifert}, \cite{Kumar}, \cite{Gonzalez}):

\begin{equation} \label{E-L}
2 \Delta H + (2H - c_0)\left[2H^2 - 2K + c_0 H\right] + \bar{P} - 2\bar{\lambda} H = 0,
\end{equation}
where the normalized parameters are defined by
\[
\bar{P} := \frac{\Delta P}{\beta}, \qquad \bar{\lambda} := \frac{\lambda}{\beta}.
\]

\noindent
These nondimensional parameters are often used in the literature for convenience, except in cases where $\beta = 1$ is assumed.

\begin{rem}
The Helfrich Shape Equation is a nonlinear, fourth-order elliptic partial differential equation which poses substantial analytical challenges. To make the problem more tractable, researchers commonly assume axial symmetry, which allows significant simplification. This axisymmetric model has been widely adopted in the literature.
\end{rem}

\section{Shape Equation for the Particular Case of Axisymmetric Surfaces with Spherical Topology}

Let $X:D \rightarrow \mathbb{R}^3$ be a smooth surface immersion in $\mathbb{R}^3$, where the domain $D$ is defined as $(0, \alpha) \times (0, 2\pi)$, an open, simply connected subset of the plane. We use the following axisymmetric parametrization:
\[
X(s, v) = \left(r(s)\cos v,\, r(s)\sin v,\, z(s)\right),
\]
where $s$ is the arclength along the profile curve $(r(s), z(s))$. Rotating this curve around the $z$-axis generates the surface. Such a surface of revolution is called an \textit{axisymmetric surface}.

We study RBC shapes modeled by closed axisymmetric surfaces, where $s$ denotes arclength, $r$ is the radial coordinate (distance from the $z$-axis), and $\psi$ is the angle between the profile curve's tangent vector and the horizontal axis. This profile can equivalently be described using the function $\psi = \psi(r)$, where $\psi$ is the \textit{tangent angle} at the point with rotation radius $r$. By definition:
\begin{equation} \label{psi}
\psi(r) = -\arctan\left(\frac{dz}{dr}\right).
\end{equation}

The coordinate functions $z(s)$ and $r(s)$ satisfy the differential relationships:
\begin{equation} \label{CC0}
\frac{dz}{ds} = -\sin\psi, \qquad \frac{dr}{ds} = \cos\psi,
\end{equation}
which imply
\[
\frac{dz}{dr} = -\tan\psi.
\]

The signs of these expressions are consistent with our chosen angle orientation: $\psi$ is measured counterclockwise from the horizontal axis, as illustrated in Figure~\ref{The_Axisymmetric_Model}.

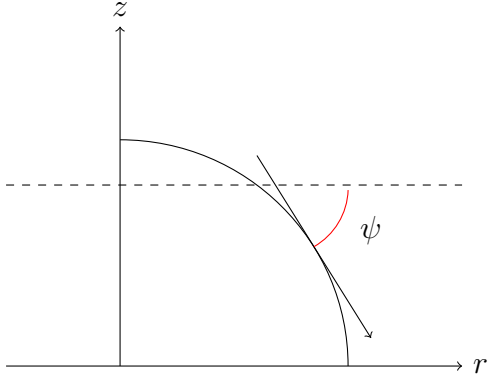
\begin{figure}
    \centering
    \begin{tikzpicture}[scale=3]
        \draw[->] (-0.5,0) -- (1.5,0) node[right] {$r$};
        \draw[->] (0,0) -- (0,1.5) node[above] {$z$};

        \draw[name path=semicircle] (0,1) arc (90:0:1);
        \draw[dashed] (-0.5,0.8) -- (1.5,0.8);

        \pgfmathsetmacro{\slope}{(0.5275 - 0)/(0.8503 - 0)};
        \pgfmathsetmacro{\tangentslope}{-1/\slope};

        \pgfmathsetmacro{\xstart}{0.8503 - 0.25}; 
        \pgfmathsetmacro{\ystart}{0.5275 - \tangentslope * 0.25}; 
        \pgfmathsetmacro{\xend}{0.8503 + 0.25}; 
        \pgfmathsetmacro{\yend}{0.5275 + \tangentslope * 0.25}; 

        \draw[name path=tangent, decoration={markings, mark=at position 1 with {\arrow[scale=1]{>}}}, postaction={decorate}] (\xstart,\ystart) -- (\xend,\yend);

        \begin{scope}[rotate around={300:(0.8503, .5275)}]
            \draw[red] (0.8503, .5275) arc (0:atan(-\tangentslope):0.3); 
        \end{scope}

        \node at (1.1, 0.6) {$\psi$}; 
    \end{tikzpicture}
    \caption{The Axisymmetric Model}
    \label{The_Axisymmetric_Model}
\end{figure}

Using this axisymmetric frame, the curvatures can be re-expressed along the profile curve $z(r)$. Following the formulations in \cite{JulicherSeifert}, we define:

\begin{equation} \label{H}
H = -\frac{k_1 + k_2}{2} = -\frac{1}{2} \left( \cos\psi \frac{d\psi}{dr} + \frac{\sin\psi}{r} \right),
\end{equation}

\begin{equation} \label{K}
K = k_1 k_2 = \cos\psi \sin\psi \cdot \frac{1}{r} \cdot \frac{d\psi}{dr},
\end{equation}
where $\psi(s)$ is again the angle between the profile curve tangent and the horizontal direction.

In our work, we follow \cite{Vaidya} and \cite{JulicherSeifert}, who unified various shape equations appearing in earlier studies. We restrict our attention to axisymmetric surfaces with spherical topology (i.e., genus zero). The shape equation discussed below applies specifically to this setting and does not hold for surfaces of higher genus or for non-axisymmetric cases.

From \cite{Vaidya}, the third-order formulation of the Helfrich Shape Equation is given by:
\begin{align*}
\mathcal{H} = & -\cos^3\psi \frac{d^3\psi}{dr^3} + 4 \sin\psi \cos^2\psi \frac{d^2\psi}{dr^2} - \cos\psi\left( \sin^2\psi - \frac{1}{2}\cos^2\psi \right)\left(\frac{d\psi}{dr} \right)^3 \\
& + \frac{7 \sin\psi \cos^2\psi}{2r} \left( \frac{d\psi}{dr} \right)^2 - \frac{2 \cos^3\psi}{r} \frac{d^2\psi}{dr^2} \\
& + \left( \frac{c_0^2}{2} - \frac{2c_0 \sin\psi}{r} + \frac{\sin^2\psi}{2r^2} + \bar{\lambda} - \frac{\sin^2\psi - \cos^2\psi}{r^2} \right) \cos\psi \frac{d\psi}{dr} \\
& + \bar{P} + \bar{\lambda}\sin\psi - \frac{\sin^2\psi}{2r^3} + \frac{c_0^2\sin\psi}{2r} - \frac{\sin\psi \cos^2\psi}{r^3}.
\end{align*}

This expression can be unwieldy, and several methods exist to simplify it depending on the chosen parameterization. In the case of axisymmetric surfaces with spherical topology, the following simplified form of the shape equation is obtained:
\begin{align}
\mathcal{H} = & \cos^2\psi \frac{d^2\psi}{dr^2} - \frac{\sin\psi \cos\psi}{2} \left(\frac{d\psi}{dr}\right)^2 - \frac{\sin\psi}{2r^2 \cos\psi} - \frac{\sin\psi \cos\psi}{2r^2} \nonumber \\
& - \frac{c_0^2 \sin\psi}{2\cos\psi} + \frac{\cos^2\psi}{r} \frac{d\psi}{dr} - \frac{c_0 \sin^2\psi}{r \cos\psi} - \frac{\bar{P}}{2} \frac{r}{\cos\psi} - \bar{\lambda} \frac{\sin\psi}{\cos\psi} = 0. \label{SE}
\end{align}

For the computational details which precede this shape equation, we refer the reader to \cite{JulicherSeifert} and \cite{Kumar}.

\

\

\section{Cassini Ovals versus Shape Equation}

The Cassini ovals have been investigated in multiple papers, including \cite{Mladenov}. Cassini Ovals are a class of algebraic curves that are associated with toric sections which can be represented in the following form
\begin{equation}
\label{O}
z(r)=\pm\sqrt{\sqrt{4a^2r^2+c^4}-a^2-r^2},
\end{equation}
where $a,c\in\mathbb{R}$. It is convenient to view $a,c$ as a ratio that characterizes the Cassini ovals. We say that $e=\frac{c}{a}$ is the eccentricity and $\epsilon = \frac{a}{c}$ the biconcavity of a Cassini Oval.

Rewriting \eqref{O} in terms of bioconcavity and rescaling we have that, 
\begin{equation}\label{CO}
z(r)=\pm\sqrt{\sqrt{4\epsilon^2r^2+1}-\epsilon^2-r^2}.
\end{equation}
It can be seen that $0\le\epsilon<1$, and therefore the argument under the outer square root
\begin{equation}
\sqrt{4\epsilon^2r^2+1}-\epsilon^2-r^2 \label{sqrt_arg}
\end{equation}
is nonnegative for $0\le r \le\sqrt{1+\epsilon^2}$, and is nonnegative for $\sqrt{-1+\epsilon^2}\le r\le\sqrt{1+\epsilon^2}$ when $\epsilon\ge1$. 
We must then restrict our domain for derivatives to exist, or Eq.~\eqref{sqrt_arg} to be strictly positive.
Letting $\epsilon \geq 0,$ the domain of definition is constrained by
\begin{equation}
D := \left\{ 
\begin{array}{l l l}
0 \le r < \sqrt{1+\epsilon^2} &\mbox{ for } &0\le\epsilon<1\\
\sqrt{-1+\epsilon^2}< r < \sqrt{1+\epsilon^2} &\mbox{ for } &\epsilon\ge1
\end{array}
\right. .
\end{equation}

The main result of the current work is stated and proved below. 

\

\begin{theorem}
    For $\epsilon > 0$, Cassini ovals as a profile curve do not satisfy the shape equation \eqref{SE}. 
\end{theorem}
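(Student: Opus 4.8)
The plan is to substitute the Cassini profile directly into \eqref{SE} and show that the resulting expression cannot vanish identically in $r$ for any choice of the constants $c_0,\bar P,\bar\lambda$, the sole exception being $\epsilon=0$. First I would convert \eqref{SE}, which is written in the tangent angle $\psi$, into the radial variable via $\tan\psi=-dz/dr$; this gives $\cos^2\psi=1/(1+z'^2)$ and $\sin\psi\cos\psi=-z'/(1+z'^2)$, together with $\psi'=-z''/(1+z'^2)$ and the corresponding expression for $\psi''$, so that every term of $\mathcal H$ becomes an explicit function of $r,z',z''$ and $z'''$. For the Cassini oval it is convenient to retain the radical $u:=\sqrt{4\epsilon^2 r^2+1}$, since then $z^2=u-\epsilon^2-r^2$ and a short computation yields the clean identities
\[
1+(z')^2=\frac{u-\epsilon^2}{u^2 z^2},\qquad \cos^2\psi=\frac{u^2 z^2}{u-\epsilon^2},\qquad \sin^2\psi=\frac{r^2(u-2\epsilon^2)^2}{u-\epsilon^2},
\]
which keep the substituted expressions tractable.

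Next I would organize $\mathcal H$ according to how the parameters enter: it is affine in $\bar P$ and $\bar\lambda$ and depends on $c_0$ only through $c_0$ and $c_0^2$, so $\mathcal H=A(r)+c_0^2B(r)+c_0C(r)+\bar P\,D(r)+\bar\lambda\,E(r)$ for explicit functions $A,\dots,E$ of $r$ and $\epsilon$. The requirement $\mathcal H\equiv0$ on $D$ is thus a single functional identity in $r$. To make it finite-dimensional, observe that only the two terms carrying an odd net power of $1/\cos\psi$, namely the $\bar P$ term and the $c_0$ term, contain the radical $\sqrt{1+z'^2}$; isolating this radical and squaring once produces a relation that is rational in $r,z',z'',z'''$. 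Feeding in the implicit derivatives of the defining polynomial $F(r,z):=(r^2+z^2+\epsilon^2)^2-4\epsilon^2r^2-1=0$ and reducing modulo $F$ collapses everything to the form $P(r)+u\,Q(r)=0$, where $P,Q$ are polynomials in $r$ whose coefficients are polynomials in $\epsilon,c_0,\bar P,\bar\lambda$.

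Since $4\epsilon^2r^2+1$ is not a perfect square in $\mathbb{R}[r]$ for $\epsilon\neq0$, the function $u$ is irrational over $\mathbb{R}(r)$, so $P\equiv0$ and $Q\equiv0$ separately; matching coefficients of each power of $r$ yields an overdetermined system in the three disposable constants. Equivalently, when $\epsilon<1$ so that $r=0\in D$, I would Taylor-expand $\mathcal H$ about the smooth pole $r=0$, where $\psi(0)=0$; because $\mathcal H$ is odd in $r$ (the potentially singular $1/r$ contributions cancel, as a leading-order check confirms), only the coefficients $a_1,a_3,a_5,a_7,\dots$ survive. The equations $a_1=a_3=a_5=0$ can be solved for $\bar P,\bar\lambda$ and an admissible $c_0$, after which $a_7=0$ reduces to a polynomial relation in $\epsilon$ alone; one then checks that its only nonnegative root is $\epsilon=0$. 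This identifies the Cassini oval as a solution only in the round-sphere limit and proves the theorem.

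The main obstacle is the sheer algebraic weight of the reduction: computing $z',z'',z'''$ from $F=0$, clearing the nested radicals $u$, $z$ and $\sqrt{u-\epsilon^2}$, and expanding to the order needed to reach $a_7$ are best carried out in a computer algebra system. The conceptual subtlety is to argue inconsistency for \emph{every} $\epsilon>0$ rather than merely generically, i.e.\ to verify that the final $\epsilon$-polynomial obtained after eliminating $c_0,\bar P,\bar\lambda$ has no positive root; I would also record separately the $\epsilon=0$ computation confirming that the sphere genuinely satisfies \eqref{SE}, so that the stated exception is sharp.
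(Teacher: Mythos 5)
Your plan is correct and is essentially the paper's own proof: the paper substitutes the Cassini profile into \eqref{SE}, clears denominators, and uses exactly your radical-independence argument --- the resulting identity splits over $\mathbb{R}(r)$ into four polynomial components $\mathcal{H}_1,\dots,\mathcal{H}_4$ (coefficients of $1$, $1/\sqrt{1+4\epsilon^2 r^2}$, $\sqrt{\sqrt{1+4\epsilon^2 r^2}-\epsilon^2}$, and their product), and matching powers of $r$ in $\mathcal{H}_3=0$ alone already forces $\bar P = 20 c_0 \epsilon^2$ together with two incompatible values of $\epsilon$, which is the desired contradiction. The only mechanical differences are that the paper never squares, keeping every identity linear in $c_0,\bar P,\bar\lambda$ so the inconsistency is immediate (your squaring step would introduce cross terms such as $c_0\bar P$), and that your Taylor-expansion alternative at the pole covers only $\epsilon<1$ since $r=0\notin D$ when $\epsilon\ge 1$, so your primary radical-splitting line is the one that matches the paper, and it works.
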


\begin{proof}
    First, we will assume that the Cassini oval satisfies the shape equation. Referring back, we have that any profile curve must satisfy
    
\begin{align*}
    \mathcal{H}=&\cos^2\psi\frac{d^2\psi}{dr^2}-\frac{\sin\psi \cos\psi}{2}\left(\frac{d\psi}{dr}\right)^2-\frac{\sin\psi}{2r^2\cos\psi}-\frac{\sin\psi \cos\psi}{2r^2}+\frac{\cos^2\psi}{r}\frac{d\psi}{dr}\\
&-\frac{c_0^2\sin\psi}{2\cos\psi}-\frac{c_0\sin^2\psi}{r\cos\psi}-\frac{\bar{P}}{2}\frac{r}{\cos\psi}-\bar{\lambda}\frac{\sin\psi}{\cos\psi}=0.
\end{align*}
\begin{figure}
     \centering
     \includegraphics[width=0.4\linewidth]{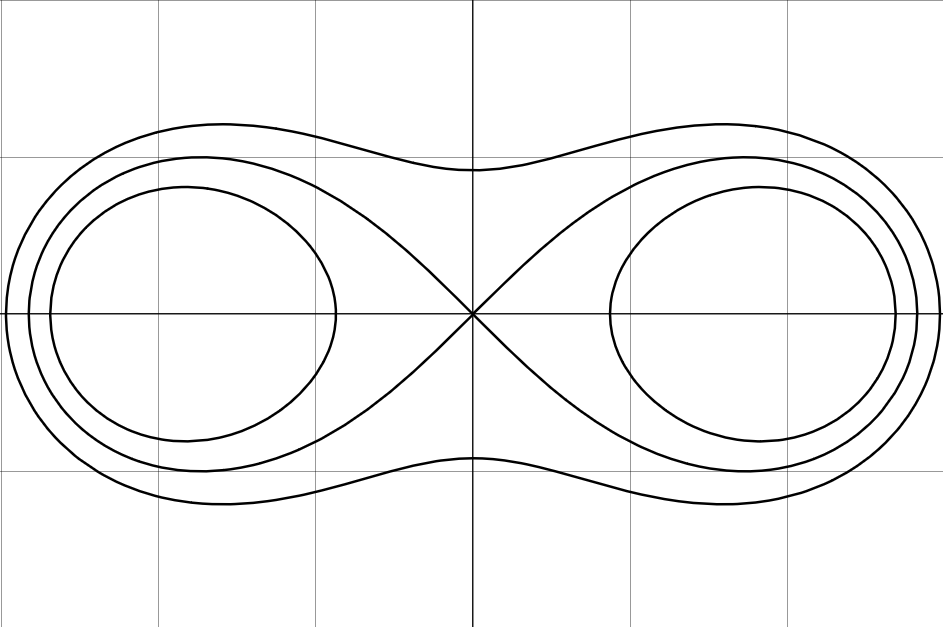}
     \caption{Cassini ovals with e=.9, e=1.0, and e=1.1.}
     \label{fig:Cassini Ovals}
 \end{figure}

We will derive a contradiction by reframing our equation in terms of the Cassini ovals and show they do not satisfy the shape equation. By definition, we have that $\psi(r)=\arctan\left(\frac{dz}{dr}\right).$ Using Eq.~\eqref{CO} and substituting $ u(r) = z'(r)$ yields the following expression
\begin{align*}
\mathcal{H}= &\frac{-5u (u')^2}{2(1+u^2)^3}+\frac{u''}{(1+u^2)^2}-\frac{u}{2r^2}\left(1+\frac{1}{1+u^2}\right) +\frac{1}{r}\frac{u'}{(1+u^2)^2}\\&-\frac{c_0^2}{2}u-\frac{c_0 u^2}{r\sqrt
{1+u^2}}-\frac{\bar{P} r}{2}\sqrt{1+u^2} -\bar{\lambda}u.
\end{align*}
We explicitly compute the first derivative of $z(r),$
\begin{equation}
u(r):=z'(r)=\left[\frac{r}{2\sqrt{\sqrt{1+4 \epsilon ^2r^2}-\epsilon ^2-r^2}}\left(\frac{4\epsilon ^2}{\sqrt{1+4 \epsilon
^2r^2}}-2\right)\right], \label{u_equation}
\end{equation}
and substitute. With the assumption that ${{\sqrt{1+4\epsilon^2r^2}-\epsilon^2-r^2}} > 0$ to simplify our expression we impose 
$$\mathcal{H} \left(\sqrt{1+4 r^2 \epsilon ^2}-\epsilon ^2\right)^3\left(\sqrt{1+4 r^2 \epsilon ^2}-r^2-\epsilon ^2\right)^{3/2} = 0\,, $$
for all $r$. With the aid of the Mathematica tool, substituting and simplifying lead to the following equation
\begin{equation}
\mathcal{H}_1 +\frac{\mathcal{H}_2 }{ \sqrt{1+4 r^2 \epsilon ^2}} + \mathcal{H}_3 \sqrt{{-\epsilon ^2
+\sqrt{1+4 r^2 \epsilon ^2}}} + \mathcal{H}_4  \sqrt{\frac{-\epsilon ^2
+\sqrt{1+4 r^2 \epsilon ^2}}{1+4 r^2 \epsilon ^2}} = 0, \label{simplified_espression}
\end{equation}
with
\begin{align}
\mathcal{H}_1 =& \frac{c_0^2 r}{2}-8 r \epsilon ^2+\frac{13}{2} c_0^2 r^3 \epsilon ^2-6 r^5 \epsilon ^2+7 c_0^2 r \epsilon ^4-72 r^3
\epsilon ^4+18 c_0^2 r^5 \epsilon ^4-10 r^7 \epsilon ^4-24 r \epsilon ^6+\frac{63}{2} c_0^2 r^3 \epsilon ^6\nonumber\\&-114 r^5 \epsilon ^6+\frac{9}{2}
c_0^2 r \epsilon ^8-60 r^3 \epsilon ^8
+r
\bar{\lambda} +13 r^3 \epsilon ^2 \bar{\lambda} +14 r \epsilon ^4 \bar{\lambda} +36 r^5 \epsilon ^4 \bar{\lambda} +63 r^3 \epsilon ^6 \bar{\lambda} +9 r \epsilon ^8 \bar{\lambda},\\ 
\mathcal{H}_2 =& -\frac{1}{2}c_0^2 r^3-{3 c_0^2 r \epsilon ^2}+{14 r^3 \epsilon ^2}-{4 c_0^2 r^5 \epsilon ^2}+{24 r \epsilon
^4}-\frac{57}{2 } {c_0^2 r^3 \epsilon ^4}+{84 r^5 \epsilon ^4}-{8 c_0^2 r^7 \epsilon ^4}-{8 c_0^2 r \epsilon ^6}\nonumber\\&+{158 r^3
\epsilon ^6}-{66 c_0^2 r^5 \epsilon ^6}+{108 r^7 \epsilon ^6}+{8 r \epsilon ^8}-{33 c_0^2 r^3 \epsilon ^8}+{236 r^5
\epsilon ^8}-{c_0^2 r \epsilon ^{10}}+{24 r^3 \epsilon ^{10}}
 -{r^3
\bar{\lambda} }\nonumber\\&-{6 r \epsilon ^2 \bar{\lambda} }-{8 r^5 \epsilon ^2 \bar{\lambda} }-{57 r^3 \epsilon ^4 \bar{\lambda} }-{16 r^7 \epsilon ^4 \bar{\lambda} }-{16
r \epsilon ^6 \bar{\lambda} }-{132 r^5 \epsilon ^6 \bar{\lambda} }-{66 r^3 \epsilon ^8 \bar{\lambda}
}-{2 r \epsilon ^{10} \bar{\lambda} },\\ 
\mathcal{H}_3 =& -c_0 r  +\frac{1}{2} \bar{P} r^3  +2 \bar{P} r \epsilon ^2  -14
c_0 r^3 \epsilon ^2  +2 \bar{P} r^5 \epsilon
^2  -19 c_0 r \epsilon ^4  \nonumber\\& +\frac{19}{2} \bar{P} r^3 \epsilon ^4 -40 c_0 r^5 \epsilon ^4  +2 \bar{P} r \epsilon ^6  -88 c_0 r^3 \epsilon ^6  -16 c_0 r \epsilon ^8 ,\\
\mathcal{H}_4 =& 
-\frac{1}{2} \bar{P} r +c_0 r^3 +7 c_0 r \epsilon ^2 -\frac{11}{2} \bar{P} r^3 \epsilon ^2 +8 c_0 r^5 \epsilon ^2
-3 \bar{P} r \epsilon ^4 +69 c_0 r^3 \epsilon ^4 -14 \bar{P} r^5
\epsilon ^4 \nonumber\\& +16 c_0 r^7 \epsilon ^4 +25 c_0 r \epsilon ^6 -\frac{25}{2}
\bar{P} r^3 \epsilon ^6 +164 c_0 r^5 \epsilon ^6 -\frac{1}{2} \bar{P} r \epsilon ^8 +104 c_0 r^3 \epsilon ^8 +4 c_0 r \epsilon
^{10} .\end{align}
Since Eq.~\eqref{simplified_espression} needs to be satisfied for all $r$ in the domain $D$ we require that
$$ \mathcal{H}_1 = 0, \quad  \mathcal{H}_2 = 0,\quad  \mathcal{H}_3 = 0, \quad \mathcal{H}_4 = 0,$$
which leads to a system of overdetermined algebraic equations that cannot be solved. To see this, it is sufficient to show that the equation $\mathcal{H}_3 = 0$ leads to a contradiction. First, we collect all terms with the same monomial
$$\mathcal{H}_3 = r^5 \left(2 \bar{P} \epsilon ^2-40 c_0 \epsilon ^4\right)+r^3 \left(\frac{\bar{P}}{2}-14 c_0 \epsilon ^2+\frac{19
\bar{P} \epsilon ^4}{2}-88 c_0 \epsilon ^6\right)$$ 
$$+r \left(-c_0+2 \bar{P} \epsilon ^2-19 c_0 \epsilon ^4+2 \bar{P} \epsilon
^6-16 c_0 \epsilon ^8\right) = 0,$$ and since this needs to be satisfied for all $r$ we get the algebraic system 
\begin{equation}
\left\{ 
\begin{array}{l}
2 \bar{P} \epsilon ^2-40 c_0 \epsilon ^4 = 0\\
\dfrac{\bar{P}}{2}-14 c_0 \epsilon ^2+\dfrac{19
\bar{P} \epsilon ^4}{2}-88 c_0 \epsilon ^6 = 0 \\
-c_0+2 \bar{P} \epsilon ^2-19 c_0 \epsilon ^4+2 \bar{P} \epsilon
^6-16 c_0 \epsilon ^8=0
\end{array}
\right. .
\end{equation}
The first equation gives $\bar{P}=20c_0 \epsilon^2.$ 
Substituting and solving in the second and third equations yields 
$\epsilon = \frac{2}{51}^{1/4}$ and $\epsilon = \frac
{1}{2}(\frac{1}{3}(-21+\sqrt
537))^{\frac{1}{4}}$, respectively. This is a contradiction since $\epsilon$ takes on two different values, yielding two different Cassini ovals. It follows then that for $\epsilon> 0 $ Cassini ovals do not satisfy the shape equation.
\end{proof}

\section{Round Spheres as Limiting Case Solutions to the Shape Equation}

\

Though Cassini Ovals as a profile curve are not critical for the Shape Equation, it can be shown that for the limit case of $\epsilon=0$ the Cassini ovals do satisfy \eqref{SE}. The easiest way this can be verified is by considering a sphere $\Sigma$ with radius a.  
Letting $k_1=k_2=H=1/a$ or $k_1=k_2=H=-1/a$, we then substitute directly into the Helfrich Shape Equation \eqref{E-L}.

In \cite{AST}, the authors have shown that $\psi=\arcsin r$ and $\psi=-\arcsin r$ represent solutions that can be verified directly into the axisymmetric shape equation that explicitly depends on $\psi$. More precisely, the following result was proven

\begin{theorem}

I). For $\psi=a\arcsin r,$ in order for spherical solutions to exist, the surface $\Sigma$ must satisfy the following constraint on the parameters $c_0,$ $\bar{P}$, $\bar{\lambda}$:

\begin{equation}
{\bar{P}} a^2+ [c_0^2 +2\bar{\lambda}] a+2c_0 =0
\end{equation}

II). For $\psi=-a\arcsin r,$ in order for spherical solutions to exist, the surface $\Sigma$ must satisfy the following constraint on the parameters $c_0,$ $\bar{P}$, $\bar{\lambda}$:

\begin{equation}
{\bar{P}} a^2 - [c_0^2 +2\bar{\lambda}] a+2c_0 =0.
\end{equation}
    
\end{theorem}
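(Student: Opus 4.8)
The plan is to recognize that the profiles $\psi=\pm\arcsin(r/a)$ generate a round sphere of radius $a$, for which the shape equation collapses almost entirely, and then to read off the parameter constraint from the few terms that survive. First I would confirm the geometric identification: integrating the relation $dz/dr=-\tan\psi$ (which follows from \eqref{CC0}) with $\psi=\arcsin(r/a)$ recovers $r^2+z^2=a^2$, so the surface of revolution is the sphere $\Sigma$ of radius $a$. Substituting $\sin\psi=r/a$, $\cos\psi=\sqrt{a^2-r^2}/a$, and $d\psi/dr=1/\sqrt{a^2-r^2}$ into \eqref{H} and \eqref{K} gives the constant values $H=-1/a$ and $K=1/a^2$. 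In particular $\Sigma$ is umbilic, so $H^2=K$ and the combination $2H^2-2K$ vanishes.

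Next, since $H$ is constant on $\Sigma$, the Laplace--Beltrami term $\Delta H$ vanishes identically. Feeding $\Delta H=0$ and $2H^2-2K=0$ into the Helfrich shape equation \eqref{E-L} removes both leading curvature contributions and leaves the purely algebraic remainder $(2H-c_0)\,c_0H+\bar P-2\bar\lambda H=0$. Setting $H=-1/a$ and clearing denominators by multiplying through by $a^2$ yields exactly $\bar P a^2+(c_0^2+2\bar\lambda)a+2c_0=0$, which is Part I. For Part II I would repeat the argument with $\psi=-\arcsin(r/a)$: this reverses the surface orientation so that $H=+1/a$ while $K=1/a^2$ is unchanged, and the same reduction now produces $\bar P a^2-(c_0^2+2\bar\lambda)a+2c_0=0$.

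Equivalently, and matching the phrasing of the statement, one can bypass \eqref{E-L} and substitute $\psi=\pm\arcsin(r/a)$ together with its first two $r$-derivatives directly into the axisymmetric form \eqref{SE}. Here the bookkeeping is heavier: the five purely geometric terms (those carrying $d^2\psi/dr^2$, $(d\psi/dr)^2$, the two $1/r^2$ pieces, and the $(d\psi/dr)/r$ piece) must be shown to cancel among themselves, after which the four terms containing $c_0^2$, $c_0$, $\bar P$, and $\bar\lambda$ combine, upon multiplication by $-2a^2\cos\psi/r$, to the same constraint. This cancellation is the concrete manifestation of the umbilic identity $2H^2-2K=0$ and of $\Delta H=0$ used in the cleaner route above.

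The computation itself is routine once the sphere is identified; the step I expect to demand the most care is the orientation bookkeeping that separates Parts I and II. Under $\psi\mapsto-\psi$ the quantities $\sin\psi$ and $d\psi/dr$ change sign while $\cos\psi$ does not, so $H$ flips sign but $K$ does not. Tracking which terms of the shape equation are odd versus even in $\psi$ is precisely what flips the sign of the linear coefficient $(c_0^2+2\bar\lambda)a$ while leaving the quadratic term $\bar P a^2$ and the constant term $2c_0$ intact. Verifying this sign pattern, rather than any individual algebraic manipulation, is the only genuinely delicate point.
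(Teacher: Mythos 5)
Your proposal is correct and matches the route the paper itself indicates: the paper verifies the theorem by taking the round sphere of radius $a$ with $k_1=k_2=H=\pm 1/a$ (so $\Delta H=0$ and $2H^2-2K=0$) and substituting directly into \eqref{E-L}, which is exactly your main argument, while your alternative verification in \eqref{SE} is the $\psi$-dependent check the paper attributes to \cite{AST}. Your reading of the profile as $\psi=\pm\arcsin(r/a)$ (rather than the literal ``$a\arcsin r$'' in the statement) is the intended one, since it is what produces $H=\mp 1/a$ and the stated constraints.
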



If we impose the condition that $\epsilon=0$ on a Cassini oval, it can be seen that \eqref{CO} corresponds to a circle, whose body of revolution is a round sphere. Based on the previous theorem, we conclude that Cassini ovals trivially satisfy the shape equation for $\epsilon=0.$

\section{Recent RBC Model based on Energy Density}

Recent progress in the analytical and numerical modeling of biconcave discoid shapes, specifically red blood cell (RBC) profiles, has been made by the three authors of \cite{Gonzalez}, including one of the present contributors. This section summarizes these developments and further disseminates their implications for the current study.

Assuming that the bending energy density at every point on the RBC surface is proportional to the average squared deviation of the normal curvature from a reference curvature, denoted by $\langle(\kappa - \mathring{\kappa})^2\rangle$, and that this energy density remains constant over the entire surface, the authors derived a differential equation governing the cross-sectional profile of an axisymmetric RBC. The profile is symmetric with respect to both the rotational axis and the equatorial plane. This differential equation, being quadratic in the first derivative, admits two distinct solution branches.

These two branches jointly define the full RBC profile. The inner region, from the cell center to the inflection point, corresponds to the first solution, which has a monotonically increasing slope starting from zero, ensuring regularity at the center. The outer region, extending from the inflection point to the cell's periphery, corresponds to the second solution, characterized by a monotonically decreasing slope. At the inflection point, the two solutions are found to extends each other regularly.

Two distinct cases are analyzed: one without spontaneous curvature and one incorporating it. The results indicate that only the model including spontaneous curvature can accurately reproduce the observed geometric parameters of real RBCs. In this refined model, the central parts of the RBC (bounded by upper and lower inflection circle) are described as a spherical cap and cup, respectively. Beyond these inflection points, the RBC surface transitions into a toroidal segment, whose cross-section approximates an elliptical arc. This geometric framework allows for accurate approximations of the cell's surface area and enclosed volume.

\begin{figure}
\centerline{\includegraphics[width=10cm]{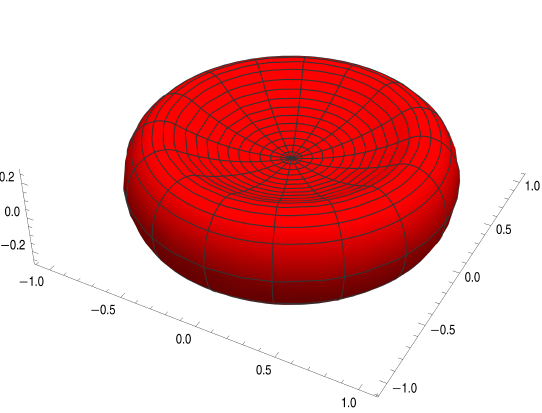}}
\caption{Axisymmetric RBC surface of revolution generated from the analytic profile given by ODE solutions, computed under the condition $H = \mathring{\kappa} \pm a$. \label{3DBloodCell}}
\end{figure}

If the bending energy density is assumed to be a linear combination of the Gaussian curvature and the sum of the squares of the principal curvatures, that is further assumed to be constant, the resulting variational functional becomes equivalent to one with constant mean curvature $H$. By solving the profile differential equation for two different constant values of $H$ (one from the cell center to the inflection point, and the other from the inflection point to the outer edge), the resulting shape will closely match the real RBC geometry. This was demonstrated analytically, numerically and experimentally. 

The two values correspond to $(H - \mathring{\kappa})^2 = a^2$, effectively modeling a constant bending energy density proportional to the square of the adjusted mean curvature.

In this formulation, the inner regions of the RBC surface (near the poles) again form a cup and cap, while the outer region resembles a nodoid surface. In conclusion, models incorporating spontaneous curvature $\mathring{\kappa}$ can successfully replicate the RBC profile. In all cases, two distinct analytic expressions for the vertical coordinate $z$ as a function of the radial coordinate $x$ describe the geometry: one for the region from the center to the inflection point, and one from the inflection point to the periphery.

\section{Conclusion and Open Problems: Case of Non-Constant Spontaneous Curvature}

Besides the limiting case $\epsilon=0,$ there are no straight-forward solutions of $\psi$ that can simultaneously satisfy the shape equations and provide appropriate approximations of the RBC. However, for the cells whose chemical and physical properties varies significantly from point to point, it may be possible to consider non-constant spontaneous curvature and still satisfy \eqref{E-L} and \eqref{SE}.

In general, the spontaneous curvature $c_0$ appearing in the Helfrich-Canham energy is assumed to be constant. This assumption is mathematically convenient and often biologically reasonable when the membrane is homogeneous. However, real biological membranes can exhibit spatial variations in curvature preferences, due to factors such as protein distributions, lipid composition, or interactions with cytoskeletal elements. To capture such heterogeneity, we consider $c_0$ as a function of the radial coordinate $r$, i.e., $c_0 = c_0(r)$.

\vspace{0.3cm}

In the axisymmetric setting, the shape equation involves terms that depend explicitly on $c_0$ and its spatial derivatives. When $c_0$ is non-constant, the equation gains additional terms involving $\frac{dc_0}{dr}$, which can significantly alter the solution landscape. These modifications may lead to new equilibrium shapes that cannot be obtained under the assumption of constant $c_0$.

\vspace{0.3cm}

To illustrate this, consider the shape angle function $\psi(r)$ satisfying the axisymmetric shape equation, now modified to include $c_0(r)$. In this case, we retain the usual definitions of the mean curvature $H$, Gaussian curvature $K$, and the tangent angle $\psi$, but note that the spontaneous curvature term now contributes derivative terms of the form $\partial_r c_0(r)$. As such, the Euler-Lagrange equation becomes more complex and potentially non-integrable in closed form.

\vspace{0.3cm}

Nonetheless, by prescribing specific functional forms for $c_0(r)$, one can obtain analytical or numerical solutions. For instance, the choice
\[
c_0(r) = C_0 \cos^2\left(\frac{\pi r}{2r_0}\right)
\]
has been proposed in several studies to model localized curvature-inducing effects, such as protein adsorption. This profile ensures smoothness and symmetry about the membrane midplane ($r = 0$) and vanishes at the membrane edge ($r = r_0$), consistent with physical constraints.

\vspace{0.3cm}

In this context, we investigate whether such non-constant $c_0(r)$ profiles could be Cassini profiles or other related ones which satisfy the axisymmetric shape equation. This represents a promising direction for modeling red blood cell shapes beyond the classical biconcave disc, especially when additional biophysical constraints are imposed.

\begin{flushleft}
    Eugenio A{\footnotesize ULISA}\\
    Department of Mathematics and Statistics, Texas Tech University, Lubbock, TX, 79409, USA\\
    E-mail: eugenio.aulisa@ttu.edu
\end{flushleft}

\begin{flushleft}
	Stone F{\footnotesize IELDS}\\
	Department of Mathematics and Statistics, Texas Tech University, Lubbock, TX, 79409, USA\\
	E-mail: stofield@ttu.edu
\end{flushleft}

\begin{flushleft}
	Magdalena T{\footnotesize ODA}\\
	Department of Mathematics and Statistics, Texas Tech University, Lubbock, TX, 79409, USA\\
	E-mail: magda.toda@ttu.edu
\end{flushleft}


\begin{thebibliography}{99}


    \bibitem{Vaidya}
    N.~Vaidya, H.~Huang, S.~Takagi, \textit{Correct Equilibrium Shape Equation of Axisymmetric Vesicles,}  
    Constanda, C., Potapenko, S. (eds) Integral Methods in Science and Engineering: Techniques and Applications, Boston, MA: Birkhäuser Boston (2008), 267-276. 
   
    \bibitem{JulicherSeifert}
    F.~J\"ulicher, S.~Udo Seifert, \textit{Shape equations for axisymmetric vesicles: A clarification,} Physical Review E, \textbf{49(5)} (1994), 4728-4731.

    \bibitem{Gruber} A.~Gruber,\textit{ Curvature Functionals and p-Willmore Energy}, Doctoral Dissertation, Texas Tech University. (2019) https://ttu-ir.tdl.org/items/b6f8d68e-a1d3-4044-8299-814a49a004a3

    \bibitem{ABPT} B.~Athukorallage, G.~Bornia, T.~Paragoda, M.~Toda, 
    \textit{Willmore-type energies and Willmore-type surfaces in space forms,}
    Journal of Geometry and Topology, \textbf{18(2)} (2015), 93-108.
    
    \bibitem{Liu} Q.~Liu, Z.~Haijun, J.~Liu, O.~Zhong-Can O., \textit{Spheres and Prolate and Oblate Ellipsoids from an Analytical Solution of Spontaneous Curvature Fluid Membrane Model}, Phys. Rev. E \textbf{60} (1999) 3227-3233.

    \bibitem{Mladenov}B.~Angelov, I.~Mladenov, \textit{On the Geometry of Red Blood Cell,} Geom. Integrability \& Quantization, \textbf{1} (2000), 27-46.

    \bibitem{Pampano}
    B.~Palmer, A.~Pampano, \textit{Stability of Membranes}, ArXiv. (2024), https://arxiv.org/abs/2401.05285.

    \bibitem{Helfrich} W.~Helfrich, \textit{Elastic Properties of Lipid Bilayers: Theory and Possible Experiments,} Zeitschrift für Naturforschung C, \textbf{28(11-12)} (1973), 693-703. 

    \bibitem{HuOuYang} H.~Jian-Guo, O.Y. Zhong-Can, \textit{
    Shape equations of the axisymmetric vesicles,} Physical Review E, 
    \textbf{47(1)} (1993), 461-467. 

    \bibitem{Nitsche} J.C.C. Nitsche,  \textit{Lectures on Minimal Surfaces,} Translated from the German by J.M. Feinberg, Cambridge University Press, Cambridge, \textbf{1} (1989). 

    \bibitem{Deuling} H.J.~Deuling , W.~Helfrich, \textit{Red blood cell shapes as explained on the basis of curvature elasticity}, Biophys J., \textbf{16(8)}(1976), 861-869.
    

    \bibitem{Aulisa-Gruber} E.~Aulisa, A.~ Gruber, \textit{Computational p-Willmore Flow with Conformal Penalty}, {ACM Trans. Graphics}, \textbf{22} (2019), 1-16.

     \bibitem{AST} E.~Aulisa, S.~Fields, M.~Toda, \textit{Red Blood Cells as Helfrich Surfaces with Spherical Topology}, {Proceedings of the International Conference Riemannian Geometry and Applications RIGA 2025, Romanian Journal of Mathematics and Computer Science}, \textbf{15(2)} (2025), 12 pp.

     
    \bibitem{Kumar} G. Kumar, \textit{Shape Equations of the Axisymmetric Vesicles,} Research Project (2022), available at \url{https://home.iitk.ac.in/~kugaurav/projects/Shape_equations_of_the_axisymmetric_vesicles.pdf}.

     \bibitem{Gonzalez} R.C.~Gonzalez,  I.~Mladenov, M.~Toda, \textit{On the Shape of Red Blood Cell.}, {Journal of Geometry and Symmetry in Physics,} \textbf{72} (2025), 1-38.
    
\end{thebibliography}
\end{document}